\numberwithin{equation}{section}
\newtheorem{theo}{Theorem}[section]
\newtheorem{coro}[theo]{Corollary}
\newtheorem{lemm}[theo]{Lemma}
\newtheorem{prop}[theo]{Proposition}
\newtheorem{defi}[theo]{Definition}
\newtheorem{rema}[theo]{Remark}
\newenvironment{proof}{\noindent \textbf{{Proof.}} \sf}
\def\qed{\hfill $\diamond$ \bigskip}
\def\lim{\mathop{\rm lim}\nolimits}
\def\Tor{\mathsf{Tor}}
\def\Ker{\mathsf{Ker}}
\def\ker{\mathop{\rm Ker}\nolimits}
\def\Coker{\mathsf{Coker}}
\def\Im{\mathsf{Im}}
\begin{document}
\sf

\title{Jacobi-Zariski long nearly exact sequences for associative algebras}
\author{Claude Cibils,  Marcelo Lanzilotta, Eduardo N. Marcos,\\and Andrea Solotar
\thanks{\footnotesize This work has been supported by the projects  UBACYT 20020130100533BA, PIP-CONICET 11220150100483CO, USP-COFECUB.
The third mentioned author was supported by the thematic project of FAPESP 2018/23690-6, and acknowledges support from the ``Brazilian-French Network in Mathematics" and from the CNPq research grant 302003/2018-5. The fourth mentioned author is a research member of CONICET (Argentina) and a Senior Associate at ICTP.}}

\date{}
\maketitle
\begin{abstract}
For an extension of associative algebras $B\subset A$ over a field and an $A$-bimodule $X$, we obtain a Jacobi-Zariski long nearly exact sequence relating the Hochschild homologies of $A$ and $B$, and the relative Hochschild homology, all of them with coefficients in $X$. This long sequence is exact twice in three. There is a spectral sequence which converges to the gap of exactness.
\end{abstract}

\noindent MSC2020: 18G25, 16E40, 16E30, 18G15

\noindent \textbf{Keywords:} Hochschild, homology, relative, Jacobi-Zariski

\section{\sf Introduction}

Let $k$ be a field. A long sequence of vector spaces and linear maps is called \emph{nearly exact} if it is a complex which is exact twice in three, see Definition \ref{definition long nearly exact}. Its homology at the spots where it is not exact is a graded vector space called the gap of the sequence.

Let $B\subset A$ be an extension of associative $k$-algebras. Let $X$ be an $A$-bimodule. In this paper we show that there is a Jacobi-Zariski long nearly exact sequence relating the Hochschild homologies of $A$ and $B$, and the relative one, all of them with coefficients in $X$. Moreover, there is a spectral sequence converging to the gap of this Jacobi-Zariski nearly exact sequence.  If $\Tor_*^B(A/B, (A/B)^{\otimes_B n})=0$ for $*>0$ and for all $n$, then its first page is given by $E^1_{p,q}=\Tor_{q}^{B^e}(X, (A/B)^{\otimes_B p})$ for $p,q>0$ and zeros elsewhere.

If $A/B$ is flat as a $B$-bimodule, A. Kaygun in \cite{KAYGUN,KAYGUNe} has obtained a Jacobi-Zariski long exact sequence. The long nearly exact sequence that we work out in this paper specialises to Kaygun's long exact sequence. Indeed, if $A/B$  is flat then the spectral sequence converges to $0$ and the nearly exact sequence is actually exact. On the other hand, {we improve} the Jacobi-Zariski long exact sequence of \cite{CLMS Pacific}. In this paper we do not assume that the bounded extension $B\subset A$ splits.

The Jacobi-Zariski sequence - also called transitivity sequence -  is originated in commutative algebra, see for instance \cite[p. 61]{ANDRE} or \cite{Iyengar}. Given a sequence of two morphisms between three commutative rings, there is an associated long exact sequence relating the Andr\'{e}-Quillen homology groups, see also \cite{GINOT}.  In characteristic zero, Andr\'{e}-Quillen homology is a direct summand of Hochschild homology defined in \cite{HOCHSCHILD1945}, see \cite{QUILLEN1968} and \cite{BARR}. Moreover, under flatness hypotheses,
 it is isomorphic to Harrison
homology with degree shifted by  1. An example of use of the Jacobi-Zariski exact sequence in commutative algebra can be found in \cite{PLANAS}. Concerning the origin of the name of Jacobi-Zariski given to the exact sequence for commutative rings, see \cite[p. 102]{ANDRE}.

In  \cite{CLMS Pacific,CLMS Pacific Erratum}, the Jacobi-Zariski sequence in non commutative algebra has been useful in relation to Han's conjecture (see \cite{HAN}). Moreover, it enables to give formulas for the change of dimension of Hochschild (co)homology when deleting or adding an inert arrow to the quiver of an algebra in \cite{CLMSS,CIBILSLANZILOTTAMARCOSSOLOTAR}.

The aim of this work is to develop the theory of the Jacobi-Zariski long nearly exact sequences for arbitrary extensions of algebras. In a forthcoming work, we will use it in relation to  Han's conjecture for a bounded extension of finite dimensional algebras which is non necessarily split. Moreover, the Jacobi-Zariski long nearly exact sequence will be also useful for describing the change of dimension of Hochschild (co)homology when adding or deleting non inert arrows.

Below we summarise the contents of this paper.

In Section \ref{relative bar normalised} we provide a brief account of relative Hochschild homology as defined by G. Hochschild in \cite{HOCHSCHILD1956}. Then we introduce the normalised relative bar resolution of an algebra $A$ with respect to a subalgebra $B$. Up to our knowledge, this resolution has been considered only once previously in \cite[p.56]{GERSTENHABERSCHACK1986}. If there exists a two-sided ideal $M$ such that $A=B\oplus M$, then the  normalised relative bar resolution is the one in \cite[Theorem 2.3]{CLMS Pacific}, see also \cite[Lemma 2.1]{CIBILS1990}. {When $B=k$ it} is the usual normalised bar resolution.

We provide a direct proof of the existence of the normalised relative bar resolution. It relies on the choice of a $k$-linear section $\sigma$ to the canonical projection $\pi:A\to A/B$. The proof  enables to set up tools and techniques for the rest of the paper.  Actually the differential does not depend on the choice of $\sigma$ but it is not provided by a simplicial module (see for instance \cite[p. 44]{LODAY1998}).

The normalised relative bar resolution enables us to provide a short nearly exact sequence that we call \emph{fundamental} in Section \ref{fundamental}. It is a short sequence of complexes which is exact except may be in its middle complex, where it has a ``gap complex''. For an $A$-bimodule $X$, this fundamental sequence is based on the complex obtained from the normalised relative bar resolution, as well as on the usual complexes of $A$ and $B$ for Hochschild homology with coefficients in $X$.

In Section \ref{long nearly} we first prove a general result: to an arbitrary short nearly exact sequence of complexes, we associate a long nearly exact sequence in homology.  Specialising this procedure to the fundamental sequence, we obtain the aimed  Jacobi-Zariski long nearly exact sequence.

Moreover, we show that the homology of the gap complex of an arbitrary short nearly exact sequence is precisely the gap of the associated long nearly exact sequence. As before, we apply this to the Jacobi-Zariski long nearly exact sequence that we have obtained.

Section \ref{gap JZ} is devoted to approximate the gap of the Jacobi-Zariski sequence. This is achieved by firstly describing the gap complex of the fundamental sequence. Then, by choosing a linear section $\sigma$ to the canonical projection $\pi:A\to A/B$, we provide a filtration of the gap complex. In the associated graded complex of this filtration, $S=\sigma(A/B)$ plays an important role when endowed with the transported structure of $B$-bimodule isomorphic to $A/B$.  If $\Tor_*^B(A/B, (A/B)^{\otimes_B n})=0$ for $*>0$ and for all $n$, then we show that the homology of the quotients of the filtration are the required $\Tor$ vector spaces.

In the last section, as mentioned, we retrieve  previous results  from \cite{KAYGUN,KAYGUNe} and we improve the Jacobi-Zariski sequence from \cite{CLMS Pacific}.

\section{\sf Normalised relative bar resolution} \label{relative bar normalised}

Let $k$ be a field and let $B\subset A$ be an extension of $k$-algebras. The category of $A$-modules is exact with respect to short exact sequences of $A$-modules which are split as sequences of $B$-modules, see \cite{QUILLEN,BUHLER}. The relative projective $A$-modules are described for instance in \cite{CLMS Pacific}, they are $A$-direct summands of induced modules from $B$. A relative projective resolution of an $A$-module is made with relative projectives and has a $B$-contracting homotopy, see \cite[p. 250]{HOCHSCHILD1956}. This way for any right $A$-module $X$ and left $A$-module $Y$,  the vector spaces $\Tor_*^{A|B}(X,Y)$ are well defined.

The extension of algebras $B\subset A$ provides an extension of the enveloping algebras $B^e\subset A^e$. Let $X$ be a left $A^e$-module, that is  an $A$-bimodule. G. Hochschild  defined in \cite{HOCHSCHILD1956}
$$H_*(A|B,X)= \Tor_*^{A^e|B^e}(X,A)$$
as  the relative Hochschild homology of $X$.

As pointed out in \cite{CLMS Pacific}, it comes down to the same to consider the extensions $B\otimes A^{op}\subset A^e$ or $B^e\subset A^e$ for the above definitions.

We will now introduce the normalised relative bar resolution. Its existence in \cite{GERSTENHABERSCHACK1986} is based on the use of the reference \cite{MACLANE}, which relevance is not clear for us.

\begin{rema}
In general $A/B$ has no associative multiplicative structure. To get around this, we consider a $k$-section $\sigma$ to $\pi: A\to A/B$ the canonical $B$-bimodule projection, that is $\pi\sigma = 1$. Observe that in general $\sigma$ cannot be chosen to be a $B$-bimodule map. Through $\sigma$ there is a non associative product in $A/B$: if $\alpha, \alpha'\in A/B$, we consider $\pi(\sigma(\alpha)\sigma(\alpha'))\in A/B$. We will use this to define the differentials below.
\end{rema}

\begin{rema}
Each summand of the differential in the next proposition is not well defined with respect to tensor products over $B$. Nevertheless, their alternate sum is well defined. In other words the next resolution is not given by a simplicial module as defined for instance in \cite[p. 44]{LODAY1998}.
\end{rema}

\begin{prop} Let $B\subset A$ be an extension of algebras. There is a relative resolution of $A$ which we call the \emph{normalised} relative bar resolution:
\begin{equation*}\label{nbrr}\cdots\stackrel{d}{\to}
A\otimes_B(A/B)^{\otimes_Bm}\otimes_BA
\stackrel{d}{\to}\cdots\stackrel{d}{\to}A\otimes_BA/B\otimes_BA\stackrel{d}{\to}A\otimes_BA
\stackrel{d}{\to}A\to 0\end{equation*}
where the last $d$ is the product of $A$ and
\begin{align*}
d(a_0\otimes\alpha_{1}\otimes\dots\otimes&\alpha_{n-1}\otimes a_n)=  a_0\sigma(\alpha_1)\otimes\alpha_2\otimes\dots\otimes\alpha_{n-1}\otimes a_n+\\
&\sum_{i=1}^{n-2}(-1)^ia_0\otimes\alpha_{1}\otimes\dots\otimes \pi(\sigma(\alpha_i)\sigma(\alpha_{i+1}))\otimes\dots\otimes\alpha_{n-1}\otimes a_n +\\
&(-1)^{n-1}a_0\otimes\alpha_{1}\otimes\dots\otimes\sigma(\alpha_{n-1})a_n.
\end{align*}
The differential $d$ does not depend on the section $\sigma$.
\end{prop}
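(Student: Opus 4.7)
The plan is to establish five points in sequence:
(i) each term $A\otimes_B(A/B)^{\otimes_B m}\otimes_B A$ is a relative $(A^e,B^e)$-projective module;
(ii) the alternating sum $d$ is a well-defined $A^e$-linear map;
(iii) $d\circ d=0$ (together with the augmentation by multiplication, which gives exactness at the rightmost spot);
(iv) the complex admits a $B^e$-linear contracting homotopy; and
(v) $d$ is independent of the choice of $\sigma$.
Point (i) is immediate: for any $B^e$-module $M$, the induction $A\otimes_B M\otimes_B A\cong A^e\otimes_{B^e}M$ is a relative projective $A^e$-module, so apply this to $M=(A/B)^{\otimes_B m}$.

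For (ii), no individual summand of $d$ respects $\otimes_B$, since $\sigma$ is only $k$-linear. The plan is to check $B$-balance at each interior boundary by pairwise cancellation of adjacent summands. Moving $b\in B$ across the leftmost boundary, the first summand contributes a discrepancy $-a_0\rho(b,\alpha_1)\otimes\alpha_2\otimes\cdots$, where $\rho(b,\alpha_1):=\sigma(b\alpha_1)-b\sigma(\alpha_1)\in B$; the $i=1$ middle summand contributes exactly the opposite, using $\pi\sigma=\mathrm{id}_{A/B}$ and the $B$-bimodule property of $\pi$. The same mechanism resolves each internal $\alpha_i$-$\alpha_{i+1}$ boundary (between the $i$-th and $(i{+}1)$-st middle summands) and the rightmost one. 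The vanishing $d\circ d=0$ in (iii) is handled analogously: most pairs of composed summands cancel in the classical bar pattern, and the interactions involving $\pi(\sigma(\alpha_i)\sigma(\alpha_{i+1}))$ are resolved by invoking associativity of the genuine product in $A$ applied to $\sigma(\alpha_{i-1})\sigma(\alpha_i)\sigma(\alpha_{i+1})$, with the $B$-valued defects of $\sigma\pi-\mathrm{id}$ telescoping away.

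For (iv) I would take the $B^e$-linear map $h(a_0\otimes\alpha_1\otimes\cdots\otimes\alpha_{m-1}\otimes a_m):=1\otimes\pi(a_0)\otimes\alpha_1\otimes\cdots\otimes\alpha_{m-1}\otimes a_m$, augmented in degree $-1$ by $h(a):=1\otimes a$. It is right $A$-linear (the last tensor factor is untouched) and left $B$-linear because $\pi$ is a $B$-bimodule map, hence $B^e$-linear. The identity $dh+hd=\mathrm{id}$ then reduces, after writing $a_0=\sigma\pi(a_0)+\bigl(a_0-\sigma\pi(a_0)\bigr)$ with the correction in $B$ and pushing it across $\otimes_B$, to a telescoping that yields the identity. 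For (v), set $\tau:=\sigma-\sigma'\colon A/B\to B$ (well-defined since $\pi\tau=0$). Expand $d_\sigma-d_{\sigma'}$ summand by summand: each contribution of the form $\tau(\alpha_i)$, being $B$-valued, may be moved across $\otimes_B$, and the contributions from the first, middle, and last summands of $d$ assemble into a telescoping sum that cancels identically.

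The main obstacle will be the bookkeeping in (ii) and (iii). Since the resolution is not simplicial, as noted in the preceding remark, one cannot invoke simplicial identities and must track the $\rho$-type and $\pi\sigma$-type corrections through the alternating sum by hand. The careful matching, at each tensor-balance boundary and at each pair of composed summands, of the genuine associativity of the product in $A$ against the non-associative pseudo-product on $A/B$ given by $(\alpha,\alpha')\mapsto\pi(\sigma(\alpha)\sigma(\alpha'))$, and the verification that the resulting corrections really do telescope, is where most of the work will sit.
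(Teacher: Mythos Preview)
Your proposal is correct and follows essentially the same route as the paper: the five points match the paper's bullet list (relative projectivity by induction, well-definedness via the key fact $\sigma(b\alpha)-b\sigma(\alpha)\in B$ and pairwise cancellation, the same contracting homotopy $h=s$, $d^2=0$ by direct telescoping, and independence of $\sigma$ via $\sigma'-\sigma\in B$). The only minor difference is that the paper additionally offers an inductive shortcut for $d^2=0$ (using that $\Im s$ generates each term as an $A$-bimodule, so it suffices to check $d^2s=0$), which you could invoke to lighten the bookkeeping you flag in (iii).
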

\begin{proof}
We outline the main steps and tools of the proof.
\begin{itemize}
\item The bimodules involved are induced bimodules, hence they are relative projective, see for instance \cite{CLMS Pacific}.
\item A somehow intricate but straightforward computation shows that $d$ is well defined with respect to tensor products over $B$. It uses repeatedly  the following key fact: if $b\in B$ and $\alpha \in A/B$, then there exists $c\in B$ such that $\sigma(b\alpha)= b\sigma(\alpha) + c$. Indeed, $\pi$ is a morphism of $B$-bimodules, hence $\pi(\sigma(b\alpha) - b\sigma(\alpha))=0$. The right analogous is also needed, namely there exists $c'\in B$ such that $\sigma(\alpha b)= \sigma(\alpha)b + c'$. The other important point to use is that the tensor products are over $B$ at the codomain.

    \item The maps $d$ are clearly $A$-bimodule morphisms.

    \item There is a contracting homotopy $s$ given by
    $$s(a_0\otimes\alpha_{1}\otimes\dots\otimes\alpha_{n-1}\otimes a_n)= 1\otimes \pi(a_0)\otimes\alpha_{1}\otimes\dots\otimes\alpha_{n-1}\otimes a_n.$$
    Note first that $s$ is well defined with respect to tensor products over $B$. Next, to check that $sd+ds=1$,  use that if $a\in A$ then there exists $c\in B$ such that $\sigma\pi(a)=a+c$.

    This contracting homotopy is a $B-A$-bimodule morphism.

    \item  We indicate two ways to verify that $d^2=0$.

    The first one relies on the fact that in each degree the codomain of $s$ is generated by $\Im s$ as an $A$-bimodule. In the inductive step of the proof one shows that $d^2s=0$, hence $d^2=0$.

    The second one is by doing the computation of $d^2$, using repeatedly that if $\alpha$ and $\alpha'$ are in $A/B$, then there exists $c\in B$ such that
    $$\sigma\left(\pi(\sigma(\alpha) \sigma(\alpha'))\right)= \sigma(\alpha)\sigma(\alpha')+c.$$

    \item Finally let $\sigma'$ be another $k$-section. If $\alpha\in A/B$ then there exists $c_\alpha\in B$ such that $\sigma'(\alpha)=\sigma(\alpha)+c_\alpha$. Using this, an elaborate but not difficult computation shows that $d$ does not depend on the section.
\qed
    \end{itemize}
\end{proof}

\begin{coro}
  Let $B\subset A$ be an extension of $k$-algebras, let $\sigma$ be a $k$-linear section of $\pi:A\to A/B$, and let $X$ be an $A$-bimodule.

Then $H_*(A|B,X)$ is the homology of the chain complex $C_*(A|B,X)$:
  \begin{equation}\ \ \cdots  \stackrel{b_{A|B}}{\to} X\otimes_{B^e} (A/B)^{\otimes_Bm}\stackrel{b_{A|B}}{\to}\cdots \stackrel{b_{A|B}}{\to} X\otimes_{B^e}A/B \stackrel{b_{A|B}}{\to} X_B\to 0\end{equation}
  where $X_B= X\otimes_{B^e}B=X/\langle bx-xb\rangle = H_0(B,X)$ and
  \begin{align*}
b_{A|B}(x\otimes\alpha_{1}\otimes\dots\otimes&\alpha_{n})=  x\sigma(\alpha_1)\otimes\alpha_2\otimes\dots\otimes\alpha_{n}+\\
&\sum_{i=1}^{n-1}(-1)^i \ x\otimes\alpha_{1}\otimes\dots\otimes \pi(\sigma(\alpha_i)\sigma(\alpha_{i+1}))\otimes\dots\otimes\alpha_{n}  +\\
&(-1)^{n}\ \sigma(\alpha_n)x\otimes\alpha_{1}\otimes\dots\otimes\alpha_{n-1}.
\end{align*}

Moreover the differential $b_{A|B}$ does not depend on the choice of the linear section $\sigma$.

\end{coro}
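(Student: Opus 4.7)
The plan is to compute $H_*(A|B,X) = \Tor_*^{A^e|B^e}(X,A)$ by applying $X \otimes_{A^e} -$ to the normalised relative bar resolution of $A$ just constructed. The terms of that resolution are relative projective $A^e$-modules, and the complex is $B^e$-contractible via the homotopy $s$ exhibited in the proof of the proposition (in fact $s$ is even $B$-$A$-linear), so this resolution is a legitimate representative for relative $\Tor$, and it suffices to identify the terms and the induced differential after applying $X \otimes_{A^e} -$.

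The key tool is the natural isomorphism
\begin{equation*}
X \otimes_{A^e} (A \otimes_B M \otimes_B A) \;\cong\; X \otimes_{B^e} M,
\end{equation*}
valid for any $B$-bimodule $M$ and coming from the induction-restriction adjunction between $A^e$- and $B^e$-modules: explicitly, $x \otimes (a \otimes m \otimes a') \mapsto a'xa \otimes m$. Applied to $M = (A/B)^{\otimes_B n}$, this yields the $n$-th term $X \otimes_{B^e} (A/B)^{\otimes_B n}$ of the desired complex, and for $n=0$ one recovers $X \otimes_{B^e} B = X_B = H_0(B,X)$.

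To read off the differential I would evaluate $d$ on normal-form generators $1 \otimes \alpha_1 \otimes \cdots \otimes \alpha_n \otimes 1$. The interior summands contain no outer $A$-factor and transport verbatim to the middle sum of $b_{A|B}$. The first summand carries $\sigma(\alpha_1)$ on the outer left slot, which slides across $\otimes_{A^e}$ to act on $x$ from the right, yielding $x\sigma(\alpha_1) \otimes \alpha_2 \otimes \cdots \otimes \alpha_n$; symmetrically, the last summand slides $\sigma(\alpha_n)$ from the outer right slot to act on $x$ from the left, producing $(-1)^n \sigma(\alpha_n) x \otimes \alpha_1 \otimes \cdots \otimes \alpha_{n-1}$, the sign $(-1)^n$ being the image of $(-1)^{n-1}$ in the proposition under the shift of indexing convention. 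Independence of $b_{A|B}$ from $\sigma$ is then immediate from the corresponding statement for $d$, combined with the naturality of the isomorphism above.

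The computation is essentially bookkeeping; the only point that needs care is the sliding of the two outer $A$-factors across $\otimes_{A^e}$, which produces the extreme terms of $b_{A|B}$ and whose signs must be tracked through the indexing convention between the proposition (with $n-1$ copies of $A/B$) and the corollary (with $n$ copies).
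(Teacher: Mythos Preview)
Your proposal is correct and is precisely the intended derivation: the paper states this result as an immediate corollary of the proposition with no separate proof, and the argument you give---apply $X\otimes_{A^e}-$ to the normalised relative bar resolution, use the natural isomorphism $X\otimes_{A^e}(A\otimes_B M\otimes_B A)\cong X\otimes_{B^e}M$, and read off the induced differential---is exactly how one unpacks it. Your handling of the sign reindexing and of the independence from $\sigma$ is also correct.
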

\section{\sf Fundamental short nearly exact sequence for relative homology}\label{fundamental}
In this section we associate to an extension of algebras a fundamental short nearly exact sequence of complexes.
 \begin{defi}\label{sequence nearly exact}
Let \begin{equation}\label{seq}
      0\to C_*\stackrel{\iota}{\to}  D_*\stackrel{\kappa}{\to} E_*\to 0
      \end{equation}  be a sequence of positively graded chain complexes of vector spaces.
It is called  \emph{short nearly exact } if $\iota$ is injective, $\kappa$ is surjective and $\kappa\iota=0$.

The  complex $\left({\Ker\kappa}/{\Im \iota}\right)_*$ is its \emph{gap complex}.
 \end{defi}

 \begin{lemm}\label{snexact by columns}
Let (\ref{seq}) be a short nearly exact sequence of {positively graded} complexes, and consider it as a double complex after the standard change of signs. Then the spectral sequence obtained by filtering the double complex by rows converges to the homology of the gap complex.  \end{lemm}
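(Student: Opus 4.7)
The plan is to view (\ref{seq}) as a three-column bicomplex and run the row-filtered spectral sequence; it will collapse at $E^2$ with the gap homology sitting in a single column, which gives the desired convergence.

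First I would set up the bicomplex: place $C_\bullet$, $D_\bullet$, $E_\bullet$ in three adjacent columns, with the vertical differentials being those of the three complexes and the horizontal differentials given by $\iota$ and $\kappa$, modified by the usual sign $(-1)^q$ on row $q$ so that the two differentials anticommute. This is what the "standard change of signs" refers to.

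Next I would compute the $E^1$ page. Filtering by rows, $E^0$ is the bicomplex itself and $d^0$ is the horizontal differential. Each row is a copy of the nearly exact sequence $0\to C_q\to D_q\to E_q\to 0$; by Definition \ref{sequence nearly exact} its horizontal homology vanishes at the two outer columns and equals $(\Ker\kappa/\Im \iota)_q$ in the middle column. Thus $E^1$ is concentrated in the middle column, where in degree $q$ it is exactly the degree-$q$ piece of the gap complex.

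Then I would identify $d^1$ with the differential of the gap complex. Since $E^1$ is supported in one column, the only possibly nonzero $d^1$ maps $E^1_{1,q}\to E^1_{1,q-1}$ and is induced by the vertical differential of $D_\bullet$; restricted to $\Ker\kappa$ and read modulo $\Im \iota$, this is precisely the differential of the gap complex. Hence $E^2_{1,q}=H_q(\Ker\kappa/\Im \iota)$ and is zero elsewhere. For $r\ge 2$ every $d^r$ leaves the column $p=1$ and lands in a zero column, so it vanishes, giving $E^2=E^\infty$ and the claimed convergence to the homology of the gap complex.

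I do not anticipate a serious obstacle: this is the standard argument for a bicomplex whose rows have homology concentrated in a single column. The only things to keep careful track of are the sign convention used to coerce $\iota$ and $\kappa$ into an anticommuting bicomplex, and the verification that the induced $d^1$ really is the gap-complex differential rather than some twist of it; both are routine bookkeeping.
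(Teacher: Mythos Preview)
Your argument is correct and is essentially the same as the paper's: compute the $E^1$ page of the row-filtration spectral sequence as the gap complex sitting in the single column $p=1$, identify $d^1$ with the differential induced from $D_*$, and observe that all higher differentials vanish so the sequence degenerates at $E^2$ to the homology of the gap complex. You supply a bit more detail on the sign convention and on why $d^1$ is the gap-complex differential, but the approach coincides with the paper's proof.
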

\begin{proof}
At page $1$, the spectral sequence given by the filtration by rows has a single column at $p=1$ which is the gap complex, its boundaries are induced by those of  $D_*$. Hence in page $2$ we also have a single column at $p=1$, its values are the homology of the gap complex. The differentials come from $0$ or go to $0$, so these spaces live forever and the spectral sequence converges to them. \qed
\end{proof}

Let $A$ be a $k$-algebra and let $X$ be an $A$-bimodule. We denote
$$C_*(A,X): \cdots \stackrel{b_A}{\to}X\otimes A^{\otimes m}\stackrel{b_A}{\to} \cdots \stackrel{b_A}{\to}X\otimes A\otimes A \stackrel{b_A}{\to}X\otimes A\stackrel{b_A}{\to} X\to 0$$  the usual complex which computes the Hochschild homology $H_*(A,X)$.
To obtain a nearly exact sequence of complexes, we consider a slightly modified truncated complex in degrees $0$ and $1$, without changing degrees, as follows:
$$\check{C}_*(A,X): \cdots \stackrel{b_A}{\to}X\otimes A^{\otimes m}\stackrel{b_A}{\to} \cdots \stackrel{b_A}{\to}X\otimes A\otimes A \stackrel{b_A}{\to} \Ker b_A\to 0.$$
This is well defined since $\Im b_A \subset \Ker b_A$. This complex still computes $H_*(A,X)$ except in degree $0$. Similarly we consider
\begin{align*}
\check{C}_*(A|B,X):
  \cdots  \stackrel{b_{A|B}}{\to} X\otimes_{B^e} A/B^{\otimes_Bn}\stackrel{b_{A|B}}{\to}\cdots \stackrel{b_{A|B}}{\to} X\otimes_{B^e}A/B\otimes_B A/B \\\stackrel{b_{A|B}}{\to} \Ker b_{A|B}\to 0
  \end{align*}
  which still computes $H_*(A|B,X)$ in positive degrees.

\begin{theo}\label{HH_ seq nearly exact complexes}
Let $B\subset A$ be an extension of $k$-algebras and let $X$ be an $A$-bimodule. There is a short sequence of positively graded chain complexes
\begin{equation}\label{nearly exact chain}
0\to \check{C}_*(B,X)\stackrel{\iota}{\to}  \check{C}_*(A,X)\stackrel{\kappa}{\to} \check{C}_*(A|B,X)\to 0
\end{equation}
which is nearly exact, except in degree $1$ where $\kappa$ is not necessarily surjective. It will be called the \emph{fundamental sequence}.
\end{theo}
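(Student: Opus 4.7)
The plan is to take $\iota$ and $\kappa$ to be the chain maps induced by the inclusion $B\hookrightarrow A$ and the canonical projection $\pi\colon A\to A/B$, and to verify injectivity of $\iota$, surjectivity of $\kappa$ in degrees $\neq 1$, and $\kappa\iota=0$ degree by degree. In each degree $m\ge 2$ take $\iota_m\colon X\otimes B^{\otimes m}\hookrightarrow X\otimes A^{\otimes m}$ to be the obvious inclusion and $\kappa_m\colon X\otimes A^{\otimes m}\to X\otimes_{B^e}(A/B)^{\otimes_B m}$ to be the composite of $1\otimes\pi^{\otimes m}$ with the canonical projection onto the balanced tensor product. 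In degree $1$ define $\iota_1$ and $\kappa_1$ as the restrictions of these maps to $\Ker b_B$ and $\Ker b_A$ respectively.

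That $\iota$ is a chain map is immediate since $b_B$ is literally the restriction of $b_A$ to $X\otimes B^{\otimes m}$; this also shows $\iota_1(\Ker b_B)\subset\Ker b_A$. For $\kappa$ a direct computation is required: writing $\sigma\pi(a_i)=a_i+c_i$ with $c_i\in B$ for each entry appearing in $b_{A|B}\kappa$, the $c_i$-contributions produced by the outermost summands move across the balanced tensor product $X\otimes_{B^e}(-)$ and become summands of the form $x\otimes\pi(c_ia_j)$ or $x\otimes\pi(a_jc_i)$. These are matched and cancelled by the analogous $c$-contributions coming from the internal summands $\pi(\sigma(\pi(a_i))\sigma(\pi(a_{i+1})))$, leaving exactly $\kappa(b_A(\cdots))$. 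Applied in the transition from degree $1$ to degree $0$, the identity reads $\kappa_0 b_A=b_{A|B}\kappa$ with $\kappa_0\colon X\to X_B$ the quotient, so that $\kappa(\Ker b_A)\subset\Ker b_{A|B}$ and $\kappa_1$ is well defined. The remaining near-exactness properties follow at once: $\iota$ is injective because $k$ being a field makes $B^{\otimes m}\hookrightarrow A^{\otimes m}$ injective and tensoring with $X$ preserves this (with $\iota_1$ inheriting injectivity by restriction); $\kappa_m$ is surjective for $m\ge 2$ because both $1\otimes\pi^{\otimes m}$ and the balancing quotient are surjective; and $\kappa\iota=0$ because $\pi$ vanishes on $B$.

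The main obstacle, and the reason for the caveat in degree $1$, is surjectivity of $\kappa_1\colon\Ker b_A\to\Ker b_{A|B}$. Any $\bar z\in\Ker b_{A|B}$ admits a lift $z\in X\otimes A$ with $\kappa(z)=\bar z$, and the chain-map identity then forces $b_A(z)\in[B,X]$; producing a preimage that already lies in $\Ker b_A$ would require correcting $z$ by an element of $X\otimes B$ absorbing $b_A(z)$, a step which the present statement deliberately leaves outside the general claim, and whose potential defect feeds into the gap complex studied in Section~\ref{gap JZ}.
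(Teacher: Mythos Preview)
Your proof is correct and follows essentially the same route as the paper: the maps $\iota$ and $\kappa$ are defined identically, the chain-map property for $\kappa$ is verified via the substitution $\sigma\pi(a_i)=a_i+c_i$ with the $c_i$-terms cancelling through the $B^e$- and $B$-balancing of the codomain, and the degree-$1$ exception is handled by restricting $\kappa$ to $\Ker b_A$ after checking the square with $\kappa_0\colon X\to X_B$ commutes. The paper spells out the degree-$2$ cancellation in full and defers the general degree to ``the same lines'', exactly as you do.
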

\begin{proof}
The map $\iota$ is clearly injective and is trivially a map of complexes. For $n\geq 1$ we set
$$\kappa(x\otimes a_1\otimes\dots\otimes a_n)= x\otimes \pi(a_1)\otimes\dots\otimes \pi(a_n)$$
which is surjective and verifies $\kappa\iota=0$. It remains to prove that $\kappa$ is a map of complexes.

In degree zero of the original complexes, let $\kappa : X\to X_B$  be given by $\kappa(x)= \overline{x}$.  Notice that we have $X\stackrel{\iota=1}{\to} X\stackrel{\kappa}{\to} X_B$ but $\kappa\iota\neq 0$. This is the reason for having considered the above truncated modification of the original complexes.

To obtain $$\kappa_| :\Ker b_A=\check{C}_1(A,X)\longrightarrow \check{C}_1(A|B,X)=\Ker b_{A|B}$$ we prove that the following diagram is commutative

\centerline
{\xymatrix{
X\otimes A \ar[d]^{b_A}          \ar[r]^-\kappa      & X\otimes_{B^e}A/B    \ar[d]^{b_{A|B}} \\
X \ar[r]^\kappa        & X_B
}}
Recall that $\sigma$ is a chosen linear section of the canonical projection $\pi: A\to A/B$. We have
\begin{itemize}
  \item $\kappa b_A(x\otimes a) = \kappa (xa - ax)= \overline{(xa - ax)}.$
  \item $b_{A|B}\kappa(x\otimes a)= b_{A|B}(x\otimes \pi(a))= \overline{x\sigma(\pi(a))} - \overline{\sigma(\pi(a))x}.$
\end{itemize}
There exists $c\in B$ such that $\sigma(\pi(a))= a+c$. Hence
$b_{A|B}\kappa(x\otimes a)= \overline{xa} + \overline{xc}- \overline{ax} -\overline{cx}.$ Finally observe that since $c\in B$, we have that $\overline{xc}=\overline{cx}$ as elements of $X_B$.

We infer the existence of the restriction $\kappa_| : \Ker b_A \to \Ker b_{A|B}. $ At degree $1$ of the original complexes we have $\kappa\iota=0$, hence the composition $$\Ker b_B\stackrel{\iota_|}{\to} \Ker b_A \stackrel{\kappa_|}{\to} \Ker b_{A|B}$$ is also zero. However $\kappa_|: \Ker b_A \to \Ker b_{A|B}$ is not surjective in general.

Next we verify that $\kappa$ is a morphism of complexes.

In degree $1$ we have
\begin{align*}\kappa b_A(x\otimes a_1 \otimes a_2)&= \kappa (xa_1 \otimes a_2 - x\otimes a_1a_2 + a_2x\otimes a_1)\\&=xa_1\otimes \pi(a_2) - x\otimes \pi(a_1a_2)+a_2x\otimes \pi(a_1).
\end{align*}
While
\begin{align*}
b_{A|B}\kappa (x\otimes &a_1 \otimes a_2)= b_{A|B}(x\otimes \pi(a_1)\otimes \pi(a_2))\\&= x\sigma(\pi(a_1))\otimes \pi(a_2) - x\otimes \pi(\sigma(\pi(a_1))\sigma(\pi(a_2)) + \sigma(\pi(a_2))x\otimes \pi(a_1).
\end{align*}

We have $\sigma(\pi(a_1))=a_1+c_1$ and $\sigma(\pi(a_2))=a_2+c_2$, for $c_1,c_2\in B.$ This way the last expression becomes:
\begin{align*}
xa_1\otimes \pi(a_2) + xc_1\otimes \pi(a_2) - x\otimes \pi(a_1a_2+a_1c_2+c_1a_2+c_1c_2) +\\a_2x\otimes \pi(a_1) +  c_2x\otimes \pi(a_1)\\
= xa_1\otimes \pi(a_2) + xc_1\otimes \pi(a_2) - x\otimes \pi(a_1a_2) - x\otimes \pi(a_1c_2) - x\otimes \pi(c_1a_2) +\\ a_2x\otimes \pi(a_1) + c_2x\otimes \pi(a_1)\\
= xa_1\otimes \pi(a_2) + xc_1\otimes \pi(a_2) - x\otimes \pi(a_1a_2) - x\otimes \pi(a_1)c_2 - x\otimes c_1\pi(a_2) +\\ a_2x\otimes \pi(a_1) +   c_2x\otimes \pi(a_1)\\
= xa_1\otimes \pi(a_2) + xc_1\otimes \pi(a_2) - x\otimes \pi(a_1a_2) - c_2x\otimes \pi(a_1) - xc_1\otimes \pi(a_2) +\\ a_2x\otimes \pi(a_1) +  c_2x\otimes \pi(a_1)\\
= xa_1\otimes \pi(a_2) - x\otimes \pi(a_1a_2) + a_2x\otimes \pi(a_1).
\end{align*}
Observe that we made use in an essential way that at the codomain of $\kappa$, the first tensor product is over $B^e$, and the other ones are over $B$. The proof in an arbitrary degree follows the same lines.
\qed
\end{proof}

\section{\sf Jacobi-Zariski long nearly exact sequence}\label{long nearly}

In this section we will obtain one of the main results of this work.

\begin{defi}\label{definition long nearly exact}
A complex of vector spaces ending at a fixed $n\geq 0$
\begin{equation*}
 \dots \stackrel{\delta}{\to} U_{m} \stackrel{I}{\to} V_{m} \stackrel{K}{\to} W_{m} \stackrel{\delta}{\to} U_{m-1} \stackrel{I}{\to} V_{m-1}\to \dots \stackrel{\delta}{\to} U_{n} \stackrel{I}{\to} V_{n} \stackrel{K}{\to} W_{n}
\end{equation*}
is a \emph{long nearly exact sequence} if it is exact except perhaps at $V_*$.

Its \emph{gap} is the graded vector space $(\Ker K/\Im I)_*$ .
\end{defi}
\begin{theo}\label{theo gap}
Let \begin{equation}\label{towards JZ nes}0\to C_*\stackrel{\iota}{\to}  D_*\stackrel{\kappa}{\to} E_*\to 0\end{equation} be a short nearly exact sequence of positively graded chain complexes (see Definition \ref{sequence nearly exact}).

Then there is a long nearly exact sequence
\begin{align*}\dots &\stackrel{\delta}{\to} H_{m}(C) \stackrel{I}{\to} H_{m}(D) \stackrel{K}{\to} H_{m}(E) \stackrel{\delta}{\to} H_{m-1}(C)  \stackrel{I}{\to} \dots \\&\stackrel{\delta}{\to} H_{0}(C) \stackrel{I}{\to} H_{0}(D) \stackrel{K}{\to} H_{0}(E)\to 0.
\end{align*}
Moreover its gap is isomorphic to $H_*(\Ker\kappa/\Im\iota)$, namely to the homology of the gap complex of (\ref{towards JZ nes}).

If (\ref{towards JZ nes}) is nearly exact except at the lowest degree where $\kappa$ is not surjective, then the same  holds except that  $H_{0}(D) \stackrel{K}{\to} H_{0}(E)$ is not surjective. In other words there is a long nearly exact sequence
\begin{align*}\dots &\stackrel{\delta}{\to} H_{m}(C) \stackrel{I}{\to} H_{m}(D) \stackrel{K}{\to} H_{m}(E) \stackrel{\delta}{\to} H_{m-1}(C)  \stackrel{I}{\to} \dots \\&\stackrel{\delta}{\to} H_{0}(C) \stackrel{I}{\to} H_{0}(D) \stackrel{K}{\to} H_{0}(E)
\end{align*}
with gap as before.

\end{theo}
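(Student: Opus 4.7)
The plan is to reduce the theorem to the standard snake lemma by splitting the short nearly exact sequence into two genuine short exact sequences of complexes. I would write (A) $0 \to \Ker\kappa \xrightarrow{j} D_* \xrightarrow{\kappa} E_* \to 0$, which is exact because $\kappa$ is surjective, and (B) $0 \to C_* \xrightarrow{\bar\iota} \Ker\kappa \xrightarrow{p} G \to 0$, which is exact because $\iota$ is injective and $G = \Ker\kappa/\Im\iota$ is the gap complex by definition. Each yields a standard long exact sequence in homology, with connecting maps $\partial_A : H_m(E) \to H_{m-1}(\Ker\kappa)$ and $\partial_B : H_m(G) \to H_{m-1}(C)$, plus all the attendant exactness relations I will need to splice together.

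Next I would define the maps of the long nearly exact sequence: take $I = j_* \circ \bar\iota_* = \iota_*$ and $K = \kappa_*$, so that $KI = 0$ follows immediately from $\kappa\iota = 0$. For the connecting map $\delta : H_m(E) \to H_{m-1}(C)$, the strategy is the familiar snake chase: pick a cycle $e \in E_m$, lift to $d \in D_m$ using surjectivity of $\kappa$; then $\kappa\partial d = \partial\kappa d = 0$ puts $\partial d$ in $\Ker\kappa_{m-1}$ as a cycle, and I want to lift $\partial d$ along $\iota$ to produce $c \in C_{m-1}$ with $\delta[e] = [c]$. The subtlety is that in the nearly exact setting $\partial d$ need not lie in $\Im\iota$, and the obstruction is precisely the class of $\overline{\partial d}$ in $H_{m-1}(G)$; $\delta$ has to be defined so as to absorb this.

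Exactness at $H_m(C)$ and $H_m(E)$ can then be proved by the usual snake-type chases, which require only injectivity of $\iota$, surjectivity of $\kappa$, and $\kappa\iota = 0$, and not the full exactness $\Ker\kappa = \Im\iota$. To identify the gap $\Ker K/\Im I$ at $H_m(D)$ with $H_m(G)$, I would combine the two long exact sequences: exactness of (A) gives $\Ker\kappa_* = \Im j_*$, exactness of (B) gives $\Im\bar\iota_* = \Ker p_*$, so that $\Ker K/\Im I = \Im j_*/j_*(\Im\bar\iota_*)$ which is a subquotient of $H_m(\Ker\kappa)$ that I would chase into $H_m(G)$ using the explicit description of $p_*$ and $\partial_B$.

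The main obstacle will be making $\delta$ an honest map on all of $H_m(E)$, since the obstruction class in $H_{m-1}(G)$ is not zero in general. A clean alternative, which I would fall back on if the direct chase proves too delicate, is to exploit the double complex underlying (\ref{towards JZ nes}): by Lemma \ref{snexact by columns}, the row-filtration spectral sequence converges to $H_*(G)$, while the column-filtration spectral sequence has $E^2$ page exactly $\Ker\iota_*$, $\Ker\kappa_*/\Im\iota_*$, $\Coker\kappa_*$, and its $d^2$ differential naturally supplies the required connecting map. Comparing the two convergences on the homology of the total complex simultaneously delivers the long nearly exact sequence and identifies its gap with $H_*(G)$.
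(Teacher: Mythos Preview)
Your primary approach---splitting into the two short exact sequences (A) and (B) and attempting a direct snake chase---runs into exactly the obstruction you name, and you do not resolve it. The map $\delta: H_m(E)\to H_{m-1}(C)$ cannot be defined by lifting $\partial_A[e]\in H_{m-1}(\Ker\kappa)$ along $\bar\iota_*$: that would require $p_*\circ\partial_A=0$, which fails precisely because the class $[\overline{\partial d}]\in H_{m-1}(G)$ need not vanish. Saying ``$\delta$ has to be defined so as to absorb this'' is not a definition. And without a well-defined $\delta$, the exactness at $H_m(E)$ and $H_{m-1}(C)$ that you claim ``follows from the usual snake-type chases'' cannot even be stated---those chases presuppose that $\partial d$ lifts to $C$. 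So the first half of the plan is not a proof.

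Your fallback is the paper's actual argument, and it is correct. The paper views the sequence as a three-column double complex and compares the two filtration spectral sequences: the column filtration has $E^2$-page $\Coker K$, $\Ker K/\Im I$, $\Ker I$ in columns $0,1,2$, while the row filtration (Lemma~\ref{snexact by columns}) converges to $H_*(G)$ concentrated in column~$1$. The step you only gesture at is the crucial one: the differential $d_2:(\Ker I)_{m-1}\to(\Coker K)_m$ is an \emph{isomorphism}, because $\Ker d_2$ and $\Coker d_2$ sit at $E^3$ in columns $2$ and $0$, survive to $E^\infty$, yet must vanish by comparison with the row filtration. The connecting map is then \emph{defined} as $\delta = (\text{inclusion})\circ d_2^{-1}\circ(\text{projection}): H_m(E)\twoheadrightarrow(\Coker K)_m\to(\Ker I)_{m-1}\hookrightarrow H_{m-1}(C)$, which gives $\Ker\delta=\Im K$ and $\Im\delta=\Ker I$ by construction; the gap identification is the remaining column-$1$ comparison. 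Note that $d_2$ goes from the $C$-side to the $E$-side, so it does not ``supply'' $\delta$ directly---it must be inverted, and the invertibility is the content of the argument.
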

\begin{rema}
The second part of the statement takes into account the specific situation occurring in degree $1$ of Theorem \ref{HH_ seq nearly exact complexes}.
\end{rema}

\begin{proof}
After the standard change of signs, we consider the short nearly exact sequence (\ref{towards JZ nes}) as a double complex with three columns at $p=0,1$ and $2$. By Lemma \ref{snexact by columns},  filtering by rows gives a spectral sequence converging to  $H_*(\Ker\kappa/\Im\iota)$. On columns $0$ and $2$, starting at page $1$, they are zeros. Indeed, $\iota$ is injective and $\kappa$ is surjective.

Consider the filtration by columns, let $I$ and $K$ be the horizontal maps at page $1$. At page $2$, first consider the columns $p=0$ and $p=2$ where we have respectively $\Coker K$ and $\Ker I$. The differential $d_2: \Ker I \to \Coker K$  lowers the total degree by $1$. We claim it is invertible. Indeed, at page $3$ the column $p=2$ consists of $\Ker d_2$, while at the same page, column $p=0$ is  $\Coker d_2$. Actually these vector spaces live forever since $d_3$ at these spots come from $0$ or go to $0$. By the analysis of the filtration by rows of the double complex, we infer that $\Ker d_2=0$ and $\Coker d_2=0$, that is $d_2: \Ker I \to \Coker K$ is an isomorphism.

The morphism $\delta$ is then obtained by pre-composing $d_2^{-1}$ with the canonical projection to $\Coker K$ and post-composing with the inclusion of $\Ker I$. By construction $\Ker\delta=\Im K$ and $\Im \delta= \Ker I$.

Still at  page $2$ but at column $p=1$, we have $\Ker K/\Im I$. At these spots $d_2$ comes from $0$ or goes to $0$. Therefore $\Ker K/\Im I$ lives forever.

We use again that both filtrations converge to the same graded vector space to infer that $H_*(\Ker\kappa/\Im\iota)$ is isomorphic to $(\Ker K/\Im I)_*$, that is to the gap of the long nearly exact sequence.

Finally, if (\ref{towards JZ nes}) is nearly exact except in the lowest degree where $\kappa$ is not surjective, then the filtration by columns at page $1$ gives in addition $\Coker \kappa$ at the spot $(0,0)$ which lives forever. At the second page $H_*(\Ker\kappa/\Im\iota)$ for $*\geq 1$ is not affected and lives forever. The proof that the lowest $d_2$ from $(2,0)$ to $(0,1)$ is invertible remains true.
\qed
\end{proof}
\begin{theo}\label{JZ}
Let $B\subset A$ be an extension of $k$-algebras, and let $X$ be an $A$-bimodule.

Then there is a Jacobi-Zariski long nearly exact sequence in Hochschild homology
\begin{align*} \dots \stackrel{\delta}{\to} H_{m}(B,X) \stackrel{I}{\to} H_{m}(A,X) \stackrel{K}{\to} H_{m}(A|B,X) \stackrel{\delta}{\to}H_{m-1}(B,X) \stackrel{I}{\to}\dots \\\stackrel{\delta}{\to} H_{1}(B,X) \stackrel{I}{\to} H_{1}(A,X) \stackrel{K}{\to} H_{1}(A|B,X).
\end{align*}
\end{theo}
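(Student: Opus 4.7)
The plan is to obtain the Jacobi-Zariski long nearly exact sequence as a direct consequence of the two main tools developed in the preceding sections, applied in sequence. Specifically, I would take the fundamental sequence of Theorem \ref{HH_ seq nearly exact complexes} as input to the general homological machine of Theorem \ref{theo gap}, and then translate the result back into Hochschild-theoretic language.

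First, I would invoke Theorem \ref{HH_ seq nearly exact complexes} to obtain the fundamental short nearly exact sequence of positively graded chain complexes
\begin{equation*}
0\to \check{C}_*(B,X)\stackrel{\iota}{\to}  \check{C}_*(A,X)\stackrel{\kappa}{\to} \check{C}_*(A|B,X)\to 0,
\end{equation*}
recording that it is nearly exact in every degree except possibly in degree $1$, where $\kappa$ need not be surjective. Next I would apply the second part of Theorem \ref{theo gap} (the version tailored to this lowest-degree failure of surjectivity, as signalled by the remark following that theorem) to obtain the long nearly exact sequence
\begin{equation*}
\dots \stackrel{\delta}{\to} H_{m}(\check{C}(B,X)) \stackrel{I}{\to} H_{m}(\check{C}(A,X)) \stackrel{K}{\to} H_{m}(\check{C}(A|B,X)) \stackrel{\delta}{\to} H_{m-1}(\check{C}(B,X)) \stackrel{I}{\to}\dots
\end{equation*}
terminating at $H_{1}(\check{C}(A|B,X))$.

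It only remains to identify the homologies of the truncated complexes with the Hochschild homologies in the statement. By construction, $\check{C}_n$ coincides with $C_n$ for $n\geq 2$, and in degree $1$ the truncated complex has $\check{C}_1=\Ker b$ sitting inside the original $C_1$; since the differential from degree $2$ is unchanged and $1$-cycles of $\check{C}_*$ are by definition the elements of $\Ker b$, the inclusion $\check{C}_*\hookrightarrow C_*$ (extended by the obvious map $\Ker b\to C_0$ which zero-extends in degree $-1$) induces an isomorphism $H_n(\check{C}_*)\cong H_n(C_*)$ for all $n\geq 1$. Applying this simultaneously to $(A,X)$, $(B,X)$, and $(A|B,X)$ converts the long nearly exact sequence above into the announced Jacobi-Zariski sequence.

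I do not expect any serious obstacle: all of the substantive work has already been carried out in the construction of the fundamental sequence and in the spectral-sequence argument of Theorem \ref{theo gap}. The only point requiring mild attention is checking that the truncation $C_*\rightsquigarrow \check{C}_*$ is invisible to homology in positive degrees, which is immediate from the definition of $\check{C}_1=\Ker b$ and the unaltered differential entering it. Once that identification is in place the theorem follows by direct substitution.
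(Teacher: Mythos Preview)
Your proposal is correct and follows exactly the paper's approach: invoke Theorem \ref{HH_ seq nearly exact complexes} for the fundamental sequence, then feed it into the second part of Theorem \ref{theo gap}. The paper's own proof consists of precisely these two sentences; your additional paragraph making explicit that $H_n(\check{C}_*)\cong H_n(C_*)$ for $n\geq 1$ simply spells out what the paper recorded just before Theorem \ref{HH_ seq nearly exact complexes}.
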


\begin{proof}
Theorem \ref{HH_ seq nearly exact complexes} establishes that the fundamental sequence \begin{equation*}
0\to \check{C}_*(B,X)\stackrel{\iota}{\to}  \check{C}_*(A,X)\stackrel{\kappa}{\to} \check{C}_*(A|B,X)\to 0
\end{equation*}
is nearly exact except in its lowest degree where $\kappa$ is not surjective.
The second part of the previous result provides the proof of the statement.
\qed
\end{proof}

\section{\sf Gap of the Jacobi-Zariski long nearly exact sequence}\label{gap JZ}

Next we will approximate the gap of the Jacobi-Zariski long nearly exact sequence of Theorem \ref{JZ}. We assume that  $\Tor_*^B(A/B, (A/B)^{\otimes_B n})=0$ for $*>0$ and for all $n$. Note that this is fulfilled if $A/B$ is either a left or a right projective $B$-module.

\begin{theo}\label{homology of JZ}
Let $B\subset A$ be an extension of $k$-algebras, and let $X$ be an $A$-bimodule. Let
\begin{align*} \dots \stackrel{\delta}{\to} H_{m}(B,X) \stackrel{I}{\to} H_{m}(A,X) \stackrel{K}{\to} H_{m}(A|B,X) \stackrel{\delta}{\to}H_{m-1}(B,X) \stackrel{I}{\to}\dots \\\stackrel{\delta}{\to} H_{1}(B,X) \stackrel{I}{\to} H_{1}(A,X) \stackrel{K}{\to} H_{1}(A|B,X)
\end{align*}
be the Jacobi-Zariski long nearly exact sequence obtained in Theorem \ref{JZ}.

If $\Tor_*^B(A/B, (A/B)^{\otimes_B n})=0$ for $*>0$ and for all $n$,
then in degrees $\geq 2$ the gap $(\Ker K/\Im I)_*$ is approximated by a spectral sequence converging to it, whose terms at page $1$ are
$$E^1_{p,q}= \Tor^{B^e}_{q}(X,(A/B)^{\otimes_Bp}) \mbox{   \ \ for } p,q>0$$
and $0$ everywhere else.

If $A$ and $X$ are finite dimensional,  then  in degree $1$ we have that $K$ is surjective and $\Ker K= \Im I$.

\end{theo}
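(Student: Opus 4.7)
The plan is to combine Theorem \ref{theo gap} with an explicit filtration analysis of the gap complex of the fundamental short nearly exact sequence (\ref{nearly exact chain}). By Theorem \ref{theo gap}, the gap $(\Ker K/\Im I)_*$ of the Jacobi-Zariski sequence is isomorphic to $H_*(\Ker\kappa/\Im\iota)$, so the task reduces to approximating the homology of this gap complex by a spectral sequence whose $E^1$-terms are the stated $\Tor^{B^e}$.

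To describe the gap complex, fix the $k$-linear section $\sigma:A/B\to A$ and write $A=B\oplus S$ with $S=\sigma(A/B)$, carrying the $B$-bimodule structure transported from $A/B$, so $S\cong A/B$ as $B$-bimodules (but $S$ is not in general a subalgebra). In each degree $n\geq 2$ this splits $X\otimes A^{\otimes n}$ as $\bigoplus_{w\in\{B,S\}^n}X\otimes M_w$; under this decomposition $\Im\iota$ is the all-$B$ summand $X\otimes B^{\otimes n}$, and after passing to $\check{C}_n(A,X)/\Im\iota$, the class $(\Ker\kappa/\Im\iota)_n$ consists of the summands indexed by mixed words together with the kernel of the canonical projection $X\otimes S^{\otimes n}\to X\otimes_{B^e}(A/B)^{\otimes_B n}$ coming from the $B^e$-action on $X$ and the relations $\otimes_B$.

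Next, define a decreasing filtration by the number of $S$-factors: let $F^p$ be spanned by the summands indexed by words containing at least $p$ occurrences of $S$. The identities $\sigma(b\alpha)=b\sigma(\alpha)+c$, $\sigma(\alpha b)=\sigma(\alpha)b+c'$, and $\sigma\pi(\sigma(\alpha)\sigma(\alpha'))=\sigma(\alpha)\sigma(\alpha')+c''$ with $c,c',c''\in B$, already used in Section \ref{relative bar normalised}, show that $b_A$ either preserves the number of $S$-factors or strictly decreases it (the latter precisely when some $\sigma(\alpha_i)\sigma(\alpha_{i+1})\in B$). Hence $b_A$ respects $F^\bullet$. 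On the associated graded piece at filtration level $p$, the differential acts only on the interspersed $B$-factors, fixing the $S$-positions; after identifying $S^{\otimes_B p}\cong(A/B)^{\otimes_B p}$, this graded piece is a bar-type complex with coefficients in $X\otimes_{B^e}(A/B)^{\otimes_B p}$. The hypothesis $\Tor^B_*(A/B,(A/B)^{\otimes_B n})=0$ for $*>0$ permits sliding the $B$-bar resolution across the $(A/B)^{\otimes_B p}$ factors without introducing higher Tor, so the horizontal homology of the $p$-th graded piece is $\Tor^{B^e}_{p+q}(X,(A/B)^{\otimes_B p})$. The resulting spectral sequence of the filtered complex then converges to $H_*(\Ker\kappa/\Im\iota)$ in degrees $\geq 2$, with $E^1$-page as claimed and zero elsewhere.

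For the degree-one statement under finite-dimensionality of $A$ and $X$, note first that the $E^1$-page vanishes on the line $p+q=1$, so the filtered part of the gap contributes nothing in total degree $1$. The only remaining source of defect in degree $1$ comes from the truncation step in the fundamental sequence, where $\kappa$ fails to be surjective; finite-dimensionality of all terms allows an Euler characteristic comparison (tracking the bottom row of the spectral sequence for the filtered modified complex) that forces this defect to vanish, whence $K$ is surjective and $\Ker K=\Im I$ in degree $1$. The main obstacle is the third paragraph: establishing that $b_A$ respects the $S$-count filtration and that the associated graded pieces are genuine bar-type complexes computing $\Tor^{B^e}(X,(A/B)^{\otimes_B p})$, which requires careful combinatorial bookkeeping of the tensor factors and systematic use of the non-multiplicativity identities for $\sigma$.
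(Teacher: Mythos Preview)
Your overall strategy for degrees $\geq 2$ matches the paper's: reduce to the gap complex via Theorem \ref{theo gap}, use the decomposition $A=B\oplus S$ to describe $(\Ker\kappa/\Im\iota)_n$ (this is exactly Lemma \ref{defect}), filter by the number of $S$-tensorands, and identify the homology of the associated graded with the stated $\Tor^{B^e}$. Two points deserve tightening. First, your description of the associated graded as ``a bar-type complex with coefficients in $X\otimes_{B^e}(A/B)^{\otimes_B p}$'' is not accurate: the $p$-th graded piece is, after the modification $(G_p/G_{p-1})_0\rightsquigarrow X\otimes S^{\otimes p}$, the complex obtained by applying $X\otimes_{B^e}-$ to a specific $B^e$-projective resolution of $S^{\otimes_B p}$, built inductively by tensoring one-sided bar resolutions of $S$ and using the hypothesis $\Tor_*^B(A/B,(A/B)^{\otimes_B n})=0$ to show the result is still a resolution (see \cite{CIBILS2000} and the inductive step in the paper). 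This is where the Tor hypothesis is genuinely used, not merely to ``slide'' bar resolutions. Second, you must also check that $H_0(G_p/G_{p-1})=0$, i.e.\ that the last differential surjects onto $L_{p,0}$; this is what forces $E^1_{p,0}=0$ rather than $\Tor_0^{B^e}$.

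Your degree-$1$ argument has a genuine gap. The claim that ``an Euler characteristic comparison forces this defect to vanish'' is not a proof: no such comparison is set up, and it is unclear what quantities you would be equating. The paper does not argue this way at all. Instead it invokes an independent result (\cite[Proposition 3.3]{CLMSS}) giving a short exact sequence
\[
0\to H^1(A|B,X)\to H^1(A,X)\to H^1(B,X)
\]
in cohomology, and then uses the finite-dimensional duality $H_*(A,X)\cong H^*(A,X')^{\prime}$ (and its relative analogue) to transport exactness to homology in degree $1$. Without such an external input, the truncation in degree $1$ of the fundamental sequence leaves $\Ker\kappa_|/\Im\iota_|$ genuinely intricate, and your spectral sequence for degrees $\geq 2$ says nothing about it.
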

\begin{rema}
We underline that the degrees of the above $\Tor$ vector spaces at page $1$ are strictly positive.
\end{rema}

Before proving the theorem, we will describe the gap complex of  the fundamental sequence (\ref{nearly exact chain}).

Let $B\subset A$ be an inclusion of $k$-algebras, $\pi:A\to A/B$ the canonical $B$-bimodule map and $\sigma : A/B\to B$  a chosen $k$-section  to $\pi$. Let $S=\Im \sigma$. Then $A=B\oplus S$ as vector spaces. Of course $S$ and $A/B$ are isomorphic vector spaces via $\sigma$ and $\pi_{|_S}$.

\begin{rema} \label{transport} By transport of structure from $A/B$, the vector space $S$ can be endowed with a $B$-bimodule structure as follows. If $s\in S$ and $b\in B$, then
$$b.s = \sigma(b\pi(s)) \mbox{ and } s.b= \sigma(\pi(s)b).$$

Of course, the resulting $B$-bimodule $S$ is not a $B$-subbimodule of $A$ in general. However there exist $c$ and $c'\in B$ such that
$$b.s = bs+c  \mbox{ and } s.b= sb+c'$$
since $\pi(b.s - bs) = 0 = \pi(s.b - sb)$.
\end{rema}

\begin{defi} In the above situation $A=B\oplus S$, let $[S_pB_q]$ be the subspace of $A^{\otimes n}$ which is the direct sum of the direct summands of $A^{\otimes n}$ having $p$ tensorands in $S$ and $q$ tensorands in $B$, with $p+q=n$.
\end{defi}

For instance for $n=3$, $$[S_2B_1]= (S\otimes S\otimes B) \oplus (S\otimes B\otimes S) \oplus (B\otimes S\otimes S).$$

We have
$$A^{\otimes n}=\bigoplus_{\substack{p+q=n\\p\geq 0 \ q\geq 0}} [S_pB_q].$$

Recall that the map $\kappa: X\otimes A^{\otimes n} \to X\otimes_{B^e} (A/B)^{\otimes_B n}$ is given by $$\kappa(x\otimes a_1\otimes\dots\otimes a_n)= x\otimes \pi(a_1)\otimes\dots\otimes \pi(a_n).$$
\begin{defi}
For $n\geq 1$, let
\begin{equation}\label{definition subspace}
L_{n,0}=\Ker \kappa_{\big|_{X\otimes S^{\otimes n}}} : X\otimes S^{\otimes n}\twoheadrightarrow X\otimes_{B^e} (A/B)^{\otimes_B n}.
\end{equation}

\end{defi}
The index $0$ in $L_{n,0}$  underlines that there are no $B$-tensorands involved in its definition.
\begin{rema}\label{L}
Let $L_{n,0}'= \Ker  \left(X\otimes (A/B)^{\otimes n}\twoheadrightarrow X\otimes_{B^e} (A/B)^{\otimes_B n}\right) $,  that is $L_{n,0}'$ is the subspace which defines the tensor products over $B^e$ and $B$ in $X\otimes (A/B)^{\otimes n}$.

By transport of structure, we have endowed $S$ with a $B$-bimodule structure such that $\pi_{|_S}: S\to A/B$ is an isomorphism of $B$-bimodules. Note that $\pi_{|_S}(L_{n,0})=L_{n,0}'$. In other words, $$(X\otimes S^{\otimes n})/L_{n,0}= X\otimes_{B^e} S^{\otimes_B n}.$$
\end{rema}

The next result describes the gap complex of the fundamental sequence (\ref{nearly exact chain}).

\begin{lemm}\label{defect}
In the situation of Theorem \ref{HH_ seq nearly exact complexes}, let
\begin{equation*}
0\to \check{C}_*(B,X)\stackrel{\iota}{\to}  \check{C}_*(A,X)\stackrel{\kappa}{\to} \check{C}_*(A|B,X)\to 0
\end{equation*}
be the fundamental short nearly exact sequence of complexes (\ref{nearly exact chain}).

Then for $n\geq 2$ we have
$$
\arraycolsep=1mm\def\arraystretch{2}
\begin{array}{llll}
\Ker \kappa = L_{n,0}\oplus \left( \bigoplus_{\substack{p+q=n\\p\geq 0 \ q>0}}\ X\otimes [S_pB_q]\right) \\
\Im \iota = X\otimes B^{\otimes n} = X\otimes [S_0B_n]\\
(\Ker\kappa/\Im \iota)_n=  L_{n,0}\oplus \left(\bigoplus_{\substack{p+q=n\\p>0\ q>0}}\ X\otimes [S_pB_q]\right).
\end{array}
$$

\end{lemm}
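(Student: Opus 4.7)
The plan is to exploit directly the vector-space decomposition $A^{\otimes n}=\bigoplus_{p+q=n}[S_pB_q]$ introduced just before the lemma. Two elementary facts suffice: $\pi$ vanishes on $B$, and $\kappa$ sends an elementary tensor $x\otimes a_1\otimes\cdots\otimes a_n$ to $x\otimes\pi(a_1)\otimes\cdots\otimes\pi(a_n)$. Tensoring with $X$ yields $X\otimes A^{\otimes n}=\bigoplus_{p+q=n}X\otimes[S_pB_q]$, and $\kappa$ vanishes identically on any summand with $q>0$, since at least one factor $\pi(b_i)=0$ then appears in the image. On the unique summand $X\otimes S^{\otimes n}$ with $q=0$, the kernel of the restriction of $\kappa$ is by Definition (\ref{definition subspace}) exactly $L_{n,0}$.

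From this the first claim
$$\Ker\kappa = L_{n,0}\oplus\bigoplus_{\substack{p+q=n\\p\geq 0,\ q>0}}X\otimes[S_pB_q]$$
follows at once: the intersection of $\Ker\kappa$ with each summand of the ambient decomposition of $X\otimes A^{\otimes n}$ is either the whole summand (when $q>0$) or precisely $L_{n,0}$ (when $q=0$), and the resulting sum is automatically direct because it is the intersection with a direct-sum decomposition of the ambient space. For the second claim, $\iota$ is simply the inclusion of $X\otimes B^{\otimes n}$ into $X\otimes A^{\otimes n}$, so $\Im\iota=X\otimes B^{\otimes n}=X\otimes[S_0B_n]$, which is precisely the $(p,q)=(0,n)$ summand already appearing in the description of $\Ker\kappa$.

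The third claim then drops out mechanically: quotienting $\Ker\kappa$ by $\Im\iota$ amounts to removing the single summand with $p=0$ (since $p=0$ together with $q>0$ and $p+q=n$ forces $(p,q)=(0,n)$), which is equivalent to strengthening the index condition to $p>0$ and $q>0$. I foresee no genuine obstacle; the proof is purely a matter of unwinding the decomposition $A=B\oplus S$ together with the definitions of $\kappa$, $\iota$, and $L_{n,0}$. Noteworthy is that the transported $B$-bimodule structure on $S$ discussed in Remark \ref{transport} plays no role here, as the lemma is a statement about underlying vector spaces only; it will only become relevant when analysing the induced differential on the associated graded of the filtration built from this description.
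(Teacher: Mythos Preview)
Your proof is correct and follows essentially the same approach as the paper: both use the decomposition $X\otimes A^{\otimes n}=\bigoplus_{p+q=n}X\otimes[S_pB_q]$, observe that $\kappa$ vanishes on every summand with $q>0$ because some $\pi(b_i)=0$, identify the kernel on the $q=0$ summand as $L_{n,0}$ by definition, and then read off $\Im\iota=X\otimes[S_0B_n]$. Your write-up is in fact slightly more detailed than the paper's (which dispatches the lemma in four lines), and your closing remark that the transported $B$-bimodule structure on $S$ is irrelevant here is accurate and anticipates the paper's later use of it.
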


\begin{proof}
For $n\geq 2$, consider the vector space decomposition $$X\otimes A^{\otimes n} = \bigoplus_{\substack{p+q=n\\p\geq 0\ q\geq 0}}\ X\otimes [S_pB_q].$$
If $q>0$, then $\kappa\left(X\otimes [S_pB_q]\right)=0$. Hence $$\bigoplus_{\substack{p+q=n\\p\geq 0\ q>0}}\ X\otimes [S_pB_q]\subset \Ker \kappa.$$
If $q=0$, then by definition $\Ker\kappa_{\big |_{X\otimes S^{\otimes n}}}=L_{n,0}$. It follows that $\Ker \kappa$ is as stated.
On the other hand, clearly $\Im \iota$ is the direct summand corresponding to $p=0$ and $q=n$; the result follows. \qed
\end{proof}

\begin{rema}
For $n=1$ there is a dead end as follows. According to Theorem \ref{HH_ seq nearly exact complexes} we should consider the sequence
\begin{equation}\label{ker}0\to \Ker b_B \stackrel{\iota_|}{\to} \Ker b_A \stackrel{\kappa_|}{\to} \Ker b_{A|B}\to 0\end{equation}
which is the well defined restriction of
\begin{equation}\label{one}0\to X\otimes B \stackrel{\iota}{\to} X\otimes A \stackrel{\kappa}{\to} X\otimes_{B^e} (A/B)\to 0.
\end{equation}
The maps $\iota$ and $\kappa$ are still injective and surjective respectively, and
$$\frac{\Ker\kappa}{\Im\iota} = \frac{(X\otimes B) \oplus L_{1,0}}{X\otimes B}= L_{1,0}.$$

In the sequence (\ref{ker}) we clearly have that $\iota_|$ is injective and $\kappa_|\iota_|=0$. However  $\Ker \kappa_| /\Im \iota_|$  is intricate to describe. Nevertheless, we will obtain from \cite{CLMSS} that if $A$ and $X$ are finite dimensional, the Jacobi-Zariski long nearly exact sequence is exact in degree $1$.

\end{rema}

\noindent\textbf{Proof of Theorem \ref{homology of JZ}.}
By Theorem \ref{theo gap} the gap of the Jacobi-Zariski long nearly exact sequence is the homology of the gap complex of the fundamental sequence. Therefore we focus on the latter.

By Lemma \ref{defect} the vector spaces of the gap complex  of the fundamental sequence are, for $n\geq 2$:
$$(\Ker \kappa/\Im \iota)_n=  L_{n,0}\oplus \left(\bigoplus_{\substack{p+q=n\\p>0\ q>0}}\ X\otimes [S_pB_q]\right).$$

We will show that there is a filtration $(G_p)_{p>0}$ of the gap complex such that $G_p/G_{p-1}$ (that is the column $p$ at page $0$ of the spectral sequence induced by the filtration) has the stated homology. In each degree the chains of $G_p$ have $p$ or less tensorands from $S$:
$$G_p= \bigoplus_{0<i\leq p} L_{i,0}\oplus \left(\bigoplus_{\substack{p \geq i>0\ q>0}}\ X\otimes [S_iB_q]\right).$$
Namely
$$\left(G_p\right)_n= (\Ker \kappa/\Im \iota)_n \mbox{ \ for }n\leq p, $$
and
$$\left(G_p\right)_n= \bigoplus_{\substack{i+q=n\\ p\geq i>0 \ q>0}}\ X\otimes [S_iB_q] \mbox{ \ for }n>p.$$

We recall that the {differentials} of the gap complex are induced from the {differentials} of the Hochschild complex $C_*(A,X)$.  The following observations show that $G_p$ is indeed a {subcomplex}:

\begin{enumerate}
\item If $s,s'\in S$, then there exists $s''\in S$ and $c\in B$ such that $ss' =s''+c$. In this case we have that the number of tensorands belonging to $S$ decreases by one when we apply the boundary formulas.
    \item \label{the exception} If $s\in S$ and $b\in B$ we have  $sb =c+s.b$, where  $c\in B$ and $s.b\in S$ is the right action of $B$ on $S$ by transport of structure, see Remark \ref{transport}.

        In the boundary formulas, the number of tensorands in $S$ decreases by one in the summand corresponding to $c$, and {it} is maintained in the other. Similarly for $bs$.
        \item If $x\in X$ and $s\in S$, then both $sx$ and $xs$ also have less tensorands in $S$.
    \end{enumerate}

For $p>0$, we have
$$(G_p/G_{p-1})_q= X\otimes[S_pB_q] \mbox{ \ \ if } q>0$$
and
$$(G_p/G_{p-1})_0= L_{p,0}.$$

Actually the three observations above show that the number of tensorands in $S$  decreases by one, except in case \ref{the exception}. In other words considering $S$ with its $B$-bimodule structure obtained by transport of structure, with  ``zero internal product" and with ``zero action on $X$" provides the same complex $G_p/G_{p-1}$.

This complex has been considered in the proof of \cite[Proposition 3.3]{CLMS Pacific,CLMS Pacific Erratum}, where we proved that  if  $\Tor_*^B(S, S^{\otimes_B n})=0$ for $*>0$ and for all $n$ then for $q>0$ we have $$H_q(G_p/G_{p-1})=\Tor^{B^e}_{q}(X,S^{\otimes_Bp})$$ while $H_0(G_p/G_{p-1})=0$.  This  finishes  the proof in degrees $\geq 2$ since the $B$-bimodules $S$ and $A/B$ are isomorphic, see Remark \ref{transport}.

For the convenience of the reader,  we next recall the proof that for $q>0$ we have $H_q(G_p/G_{p-1})=\Tor^{B^e}_{q}(X,S^{\otimes_Bp})$ while $H_0(G_p/G_{p-1})=0$.

The standing step is to replace $G_p/G_{p-1}$  by $G'_p$  in degree $0$ as follows: \\
 \centerline{$(G'_p)_0=X\otimes S^{\otimes p}=X\otimes [S_pB_0]$ while  $(G'_p)_q=  (G_p/G_{p-1})_q$ for $q>0$}. \\
 The boundaries of $G'_p$ are the same as those of $G_p/G_{p-1}$, this makes sense since $L_{p,0}\subset X\otimes S^{\otimes p}.$

For $p>0$ we assert that $H_q(G'_p)=\Tor_q^{B^e}(X,S^{\otimes_Bp})$ for all $q\geq 0$.

We consider below a projective resolution of a $B^e$-module $S$ given in \cite[Proposition 4.1]{CIBILS2000}.  Applying the functor $X\otimes_{B^e}-$ to it gives $G'_1$, which proves the assertion for $p=1$.  For tensor products of modules over $k$, we omit in what follows the tensor product sign $\otimes$.

Consider the following complex of free $B^e$-modules:
$$\cdots\stackrel{d}{\to}\displaystyle \bigoplus_{\substack{ p+q=n+1\\ p>0\ q>0}}  B^qSB^p\stackrel{d}{\to}\cdots\stackrel{d}{\to}
\ BSB^2\oplus B^2SB \ \stackrel{d}{\to}\   BSB \ \stackrel{d}{\to} S\to 0,$$
where the first differential is given by $d(b\otimes s\otimes b')=b.s.b'$. In greater degrees, the differential is the differential of the total complex of the double complex which has $ B^qSB^p$ at the spot $(q,p)$, with vertical and horizontal differentials $ B^qSB^p\to B^qSB^{p-1}$ and $ B^qSB^p \to B^{q-1}SB^{p}$ given respectively by
\vskip1mm
$b_1\otimes \cdots\otimes b_q\otimes s \otimes b'_1\otimes\cdots\otimes b'_p\mapsto \\ (-1)^{q+1}[b_1\otimes \cdots\otimes b_q\otimes s.b'_1\otimes\cdots\otimes b'_p+\\
\sum_{i=1}^{p-1} (-1)^i b_1\otimes \cdots\otimes b_q\otimes s \otimes b'_1\otimes\cdots \otimes b'_ib'_{i+1} \otimes\cdots\otimes b'_p]$
\vskip1mm
and
\vskip1mm
$b_1\otimes \cdots\otimes b_q\otimes s \otimes b'_1\otimes\cdots\otimes b'_p\mapsto \\
\sum_{i=1}^{q-1} (-1)^i b_1\otimes \cdots \otimes b_ib_{i+1} \otimes \cdots \otimes b_q\otimes s \otimes b'_1\otimes\cdots\otimes b'_p+\\
(-1)^q b_1\otimes \cdots\otimes b_q.s \otimes b'_1\otimes\cdots\otimes b'_p$

For $p=2$ we proceed as in \cite{CLMS Pacific Erratum}, while for $p>2$ we will use the hypothesis $\Tor_*^B(S, S^{\otimes_B n})=0$ for $*>0$ and for all $n$.  Let $F_\bullet \to S$ be the above projective resolution  of $S$. Tensoring it over $B$ with the left bar resolution
$$\cdots \to B B S\to B S\to S\to 0$$ of $S$ yields the double complex $\mathbf D$:
\vskip3mm

\begin{tikzcd}
  & \vdots \arrow[d]                  & \vdots \arrow[d]                    & \vdots \arrow[d]                    &                  \\
0 & BBS\otimes_BS \arrow[d] \arrow[l] & BBS\otimes_BF_0 \arrow[d] \arrow[l] & BBS\otimes_BF_1 \arrow[d] \arrow[l] & \cdots \arrow[l] \\
0 & BS\otimes_BS \arrow[d] \arrow[l]  & BS\otimes_BF_0 \arrow[d] \arrow[l]  & BS\otimes_BF_1 \arrow[d] \arrow[l]  & \cdots \arrow[l] \\
0 & S\otimes_BS \arrow[l] \arrow[d]   & S\otimes_BF_0 \arrow[l] \arrow[d]   & S\otimes_BF_1 \arrow[l] \arrow[d]   & \cdots \arrow[l] \\
  & 0                                 & 0                                   & 0                                   &
\end{tikzcd}

\vskip3mm
The left bar resolution of $S$ has a right $B$-module contracting homotopy. Hence the columns are acyclic and the total complex is exact.

The bimodules are projective, except those of the bottom row and the left column. To obtain the required projective resolution of $S\otimes_B S$ as a $B$-bimodule, we proceed as in \cite{CIBILS2000}. Let $\mathbf X$ be the double subcomplex of  $\mathbf D$ given by the bottom row and the left column. We assert that $\mathsf{Tot}(\mathbf{D/X}) \to S\otimes_BS$ is a projective resolution of the $B$-bimodule $S\otimes_BS$. The homology of the bottom row is $\Tor_*^B(S,S)$, which is zero in positive degrees by hypothesis, this shows that the homology of $\mathsf{Tot}(\mathbf X)$ is zero in positive degrees. In degree zero its homology is $S\otimes_B S$, by a direct computation. Indeed, for surjective morphisms $f: Y\to X$ and $g:Z\to X$, and $(f,g): Y\oplus Z\to X$ we have that
${\ker (f,g)}/(\ker f\oplus\ker g)$ is isomorphic to  $X$
by the map induced by $f$ or by $g$.

The exact sequence of complexes $0\to \mathbf X \to \mathbf D\to \mathbf {D/X}\to 0$ gives a long exact sequence which shows that  $\mathsf{Tot}(\mathbf{D/X})$ has homology $S\otimes_B S$ in its last term. Otherwise it is exact, and is the required resolution of  $S\otimes_B S$.

Tensoring over $B$ the last resolution with the left bar resolution of $S$ and using the hypothesis $\Tor_*^B(S,S\otimes_B S)=0$ for $*>0$ yields a projective resolution of the $B$-bimodule $S^{\otimes_B 3}$. Continuing this way provides a  projective resolution of the $B$-bimodule $S^{\otimes_B p}$ for $p\geq 1$, and this shows that the homology of the $p$-th column is  $\Tor^{B^e}_{*}(X,S^{\otimes_Bp})$.

 Back to $G_p/G_{p-1}$, from the above we have $H_0(G'_p)= \Tor^{B^e}_0(X,S^{\otimes_Bp})$.
 The original complex  $G_p/G_{p-1}$ has $L_{p,0}$ in degree $0$, hence the image of the last differential of $G'_p$ is contained in $L_{p,0}\subset X\otimes S^{\otimes p}$.
 Recall that
 $$(X\otimes S^{\otimes p})/L_{p,0} = X\otimes_{B^e} S^{\otimes_B p} = \Tor^{B^e}_0(X,S^{\otimes_Bp}).$$
 The image of the last differential  of $G'_p$ is then $L_{p,0}$, that is the last differential of  $G_p/G_{p-1}$ is surjective and $H_0(G_p/G_{p-1})=0$.

\normalsize
 For $p=1$  it is proven in \cite[Proposition 3.3]{CLMSS} that there is a short exact sequence for Hochschild cohomology:
 $$0\to H^1(A|B,X)\stackrel{\iota}{\to} H^1(A,X) \stackrel{\kappa}{\to} H^1(B,X)$$

 Let $V'$ denote the dual of a vector space $V$. It is well known that for finite dimensional $A$ and $X$, we have $H_*(A,X)=(H^*(A,X'))' $. The same holds in the relative setting, and the result follows.
 \qed

\section{\sf  Jacobi-Zariski long exact sequences}\label{last}

The next result is a specialisation of the Jacobi-Zariski nearly exact sequence of the previous section, in order to obtain the long exact sequence of A. Kaygun in \cite{KAYGUN,KAYGUNe}.

For the convenience of the reader we provide a proof of the following result.
\begin{lemm}\label{flatflat}
Let $\Lambda$ be any $k$-algebra, and let $P$ and $Q$ be flat $\Lambda$-bimodules.
Then the $\Lambda$-bimodule $P\otimes_\Lambda Q$ is flat.
\end{lemm}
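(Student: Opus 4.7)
The plan is to prove flatness of $P\otimes_\Lambda Q$ as a $\Lambda^e$-module by exhibiting the functor $M\mapsto M\otimes_{\Lambda^e}(P\otimes_\Lambda Q)$ as a composition of two exact functors on $\Lambda$-bimodules. The first step is to establish the natural isomorphism
\[
M\otimes_{\Lambda^e}(P\otimes_\Lambda Q) \;\cong\; (Q\otimes_\Lambda M)\otimes_{\Lambda^e} P,
\]
where $Q\otimes_\Lambda M$ is regarded as a $\Lambda$-bimodule with left action inherited from $Q$ and right action inherited from $M$. The map is $m\otimes(p\otimes q)\mapsto (q\otimes m)\otimes p$; well-definedness in each direction amounts to checking the three defining relations of the two $\otimes_{\Lambda^e}$'s and the internal $\otimes_\Lambda$, which is a routine verification.

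The second step is the auxiliary fact that any flat $\Lambda$-bimodule $Q$ is flat as a right $\Lambda$-module. Given a short exact sequence $0\to L'\to L\to L''\to 0$ of left $\Lambda$-modules, I would form the $\Lambda$-bimodules $L\otimes_k\Lambda$ with left action from $L$ and right action from $\Lambda$. Since $-\otimes_k\Lambda$ is exact over the field $k$, this yields an exact sequence of $\Lambda$-bimodules. Combined with the flatness of $Q$ over $\Lambda^e$ and the natural identification $(L\otimes_k\Lambda)\otimes_{\Lambda^e} Q\cong Q\otimes_\Lambda L$, given essentially by $(\ell\otimes\mu)\otimes q\mapsto \mu q\otimes \ell$, this produces exactness of $Q\otimes_\Lambda -$ on left $\Lambda$-modules.

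Finally, since exactness of sequences of $\Lambda$-bimodules is detected at the level of underlying vector spaces and the bimodule structure transports naturally through the tensor product, the functor $Q\otimes_\Lambda -$ is exact on $\Lambda$-bimodules. Composing with $-\otimes_{\Lambda^e} P$, which is exact by hypothesis on $P$, we conclude that $M\mapsto M\otimes_{\Lambda^e}(P\otimes_\Lambda Q)$ is exact, proving the claim. The main obstacle is purely notational: carefully tracking which $\Lambda$-action is used on each tensor factor while verifying the two natural isomorphisms; there is no conceptual difficulty beyond standard manipulations with flat modules.
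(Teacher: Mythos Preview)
Your argument is correct and follows essentially the same approach as the paper's own proof: both first show that a flat $\Lambda$-bimodule is one-sided flat via the ``tensor with a free factor'' trick (you use $L\otimes_k\Lambda$, the paper uses $\Lambda\otimes X$), and both then rewrite $M\otimes_{\Lambda^e}(P\otimes_\Lambda Q)$ by an associativity-type isomorphism so that the functor factors as a one-sided tensor followed by a $\Lambda^e$-tensor. The only cosmetic difference is that the paper uses the isomorphism $U\otimes_{\Lambda^e}(P\otimes_\Lambda Q)\cong (U\otimes_\Lambda P)\otimes_{\Lambda^e}Q$ and invokes left-flatness of $P$ together with bimodule flatness of $Q$, whereas you swap the roles of $P$ and $Q$; this is a symmetric choice with no substantive difference.
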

\begin{proof}
First we record that if $P$ is a flat bimodule, then it is both left and right flat. Indeed, let $X\hookrightarrow Y$ be an injection of right modules, and consider the inferred injection of bimodules
$\Lambda\otimes X\hookrightarrow\Lambda\otimes Y$. We have an injection
$$P\otimes_{\Lambda^e}(\Lambda\otimes X)\hookrightarrow P\otimes_{\Lambda^e}(\Lambda\otimes Y).$$
We have a natural isomorphism $P\otimes_{\Lambda^e}(\Lambda\otimes X) = X\otimes_\Lambda P$, sending $p\otimes\lambda\otimes x$ to $x\otimes p\lambda$. The result follows.

Let now $U\hookrightarrow V$ be an injection of right $\Lambda^e$-modules, we want to prove that $U\otimes_{\Lambda^e} (P\otimes_\Lambda Q)\rightarrow V\otimes_{\Lambda^e} (P\otimes_\Lambda Q)$ is an injection. Note that we have a natural isomorphism
$$U\otimes_{\Lambda^e} (P\otimes_\Lambda Q) = (U\otimes_{\Lambda} P)\otimes_{\Lambda^e} Q$$
induced by the identity of $U\otimes P\otimes Q$.
Now since $P$ is left flat, there is an injection of bimodules $U\otimes_{\Lambda} P \hookrightarrow V\otimes_{\Lambda} P$. Applying the exact functor $-\otimes_{\Lambda^e} Q$ gives the result.\qed
\end{proof}

We give now an alternative proof of results in \cite{KAYGUN,KAYGUNe}.
\begin{theo}
Let $B\subset A$ be an extension of $k$-algebras such that $A/B$ is a flat $B$-bimodule. Let $X$ be an $A$-bimodule. There is a Jacobi-Zariski long exact sequence
\begin{align*} \dots \stackrel{\delta}{\to} H_{m}(B,X) \stackrel{I}{\to} H_{m}(A,X) \stackrel{K}{\to} H_{m}(A|B,X) \stackrel{\delta}{\to}H_{m-1}(B,X) \stackrel{I}{\to}\dots \\\stackrel{\delta}{\to} H_{2}(B,X) \stackrel{I}{\to} H_{2}(A,X) \stackrel{K}{\to} H_{2}(A|B,X).
\end{align*}
If $A$ and $X$ are finite dimensional, then the Jacobi-Zariski long exact sequence ends at degree $1$:
\begin{align*} \dots \stackrel{\delta}{\to} H_{m}(B,X) \stackrel{I}{\to} H_{m}(A,X) \stackrel{K}{\to} H_{m}(A|B,X) \stackrel{\delta}{\to}H_{m-1}(B,X) \stackrel{I}{\to}\dots \\\stackrel{\delta}{\to} H_{1}(B,X) \stackrel{I}{\to} H_{1}(A,X) \stackrel{K}{\to} H_{1}(A|B,X).
\end{align*}
\end{theo}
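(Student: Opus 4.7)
The plan is to derive both stated long exact sequences by showing that, under the hypothesis that $A/B$ is a flat $B$-bimodule, the gap of the Jacobi-Zariski long nearly exact sequence of Theorem \ref{JZ} vanishes (in degrees $\geq 2$ for the general case, and additionally in degree $1$ in the finite dimensional case). For this I would invoke Theorem \ref{homology of JZ}, whose conclusion is precisely tailored to this situation.

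First I would verify that the hypothesis $\Tor_*^B(A/B, (A/B)^{\otimes_B n}) = 0$ for $* > 0$ and all $n$ holds. An induction on $n$ using Lemma \ref{flatflat} shows that $(A/B)^{\otimes_B n}$ is a flat $B$-bimodule for every $n \geq 1$: the base case $n=1$ is the assumption, and if $(A/B)^{\otimes_B n}$ is a flat bimodule then so is $(A/B) \otimes_B (A/B)^{\otimes_B n} = (A/B)^{\otimes_B (n+1)}$ by Lemma \ref{flatflat}. As noted at the start of the proof of that lemma, a flat bimodule is in particular flat as a one-sided module, so the required $\Tor$-groups over $B$ vanish.

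Second I would compute the $E^1$-page of the spectral sequence of Theorem \ref{homology of JZ}. Its entries are $E^1_{p,q} = \Tor^{B^e}_{p+q}(X, (A/B)^{\otimes_B p})$ for $p, q > 0$ and zero elsewhere. Since $(A/B)^{\otimes_B p}$ is flat as a $B^e$-module by the previous step, these $\Tor$-groups vanish in positive total degree, and in particular for all $p, q > 0$. Hence $E^1 = 0$, the spectral sequence collapses, and the gap $(\Ker K/\Im I)_*$ vanishes in degrees $\geq 2$. Combined with the exactness at $H_m(B,X)$ and $H_m(A|B,X)$ that already holds in the nearly exact sequence of Theorem \ref{JZ} (recall that the word ``nearly'' only flags the middle spots and the lowest non-surjectivity of $K$), this yields the first stated long exact sequence, terminating at $H_2(A|B,X)$.

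Finally, for $A$ and $X$ finite dimensional, Theorem \ref{homology of JZ} further asserts that in degree $1$ the map $K$ is surjective and $\Ker K = \Im I$. Appending this to the previous exact sequence extends it one step further, all the way down to $H_1(A|B, X)$, giving the second stated exact sequence. The main (and only) point requiring attention is the flatness-propagation in the first step; once Lemma \ref{flatflat} is available, the rest is a direct application of Theorems \ref{JZ} and \ref{homology of JZ}.
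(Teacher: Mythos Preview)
Your proposal is correct and follows essentially the same approach as the paper's proof: verify the $\Tor$ hypothesis of Theorem \ref{homology of JZ} via one-sided flatness, then use Lemma \ref{flatflat} (inductively) to see that each $(A/B)^{\otimes_B p}$ is $B^e$-flat so that the $E^1$-page vanishes, and finally invoke the degree-$1$ clause of Theorem \ref{homology of JZ} in the finite dimensional case. The only cosmetic difference is that you front-load the induction on tensor powers, whereas the paper first checks the $\Tor$ hypothesis using just the one-sided flatness of $A/B$ and then separately applies Lemma \ref{flatflat} to the tensor powers.
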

\begin{proof}
 Since $A/B$ is flat as a $B$-bimodule, $A/B$ is flat as a left or right $B$-module (see for instance the first part of the proof of Lemma \ref{flatflat}). Then $\Tor_*^B(A/B, (A/B)^{\otimes_B n})=0$ for $*>0$ and for all $n$. By Theorem \ref{homology of JZ}, there is a spectral sequence converging to the gap of the long nearly exact sequence in degrees $\geq 2$. The first page of this spectral sequence is
$$E^1_{p,q}= \Tor^{B^e}_{q}(X,(A/B)^{\otimes_Bp}) \mbox{   \ \ for } p,q>0$$
and $0$ elsewhere.
By Lemma \ref{flatflat}, for $p>0$ we have that the $B^e$-module $(A/B)^{\otimes_B p}$ is flat. Hence, if $q>0$, we have $\Tor^{B^e}_{q}(X,(A/B)^{\otimes_Bp})=0$. Consequently the first page of the spectral sequence is $0$, so the graded vector space to which it converges is also $0$.

If $A$ and $X$ are finite dimensional, the second part of Theorem \ref{homology of JZ} provides the result.
\qed

\end{proof}

Next we improve the Jacobi-Zariski exact sequence ending at some degree obtained in \cite[Proposition 3.7]{CLMS Pacific} by not assuming that the extension splits.

\begin{defi}(\cite{CLMS Pacific})
Let $\Lambda$ be a $k$-algebra and let $M$ be a $\Lambda$-bimodule. The bimodule $M$ is \emph{tensor nilpotent} if there exists $n$ such that $M^{\otimes_\Lambda n}=0$. The index of nilpotency of $M$ is the smallest $n$ such that $M^{\otimes_\Lambda n}=0$.
\end{defi}

\begin{defi}(\cite{CLMS Pacific})
Let $\Lambda$ be a $k$-algebra and let $M$ be a $\Lambda$-bimodule. The bimodule $M$ is \emph{bounded} if it is tensor nilpotent, projective on one side, and of finite projective dimension as a bimodule.
\end{defi}

\begin{theo}
Let $B\subset A$ be an extension of $k$-algebras and let $X$ be an $A$-bimodule. Assume that $A/B$ is a bounded $B$-bimodule, with index of nilpotency $n$ and projective dimension $u$.

Then there is a Jacobi-Zariski long exact sequence

\begin{align*} \dots \stackrel{\delta}{\to} H_{m}(B,X) \stackrel{I}{\to} H_{m}(A,X) \stackrel{K}{\to} H_{m}(A|B,X) \stackrel{\delta}{\to}H_{m-1}(B,X) \stackrel{I}{\to}\dots \\\stackrel{\delta}{\to} H_{n(u+1)}(B,X) \stackrel{I}{\to} H_{n(u+1)}(A,X) \stackrel{K}{\to} H_{n(u+1)}(A|B,X).
\end{align*}

\end{theo}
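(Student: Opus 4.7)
My approach is to apply Theorem~\ref{homology of JZ} and exploit boundedness of $A/B$ to force the approximating spectral sequence to vanish in total degrees $\geq nu$, upgrading the long nearly exact sequence to a genuine long exact sequence in that range.

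First, I would verify the Tor-vanishing hypothesis of Theorem~\ref{homology of JZ}. Since $A/B$ is bounded, it is projective, hence flat, on one side as a $B$-module; as already observed in Section~\ref{gap JZ}, this is enough to yield $\Tor_*^B(A/B,(A/B)^{\otimes_B n})=0$ for all $*>0$ and all $n$. Theorem~\ref{homology of JZ} then supplies, in degrees $\geq 2$, a spectral sequence converging to the gap $(\Ker K/\Im I)_*$ of the Jacobi-Zariski nearly exact sequence of Theorem~\ref{JZ}, with first page
\[
E^1_{p,q}=\Tor^{B^e}_{p+q}(X,(A/B)^{\otimes_B p})\quad\text{for }p,q>0,
\]
and zero elsewhere.

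The core of the proof is two complementary vanishing statements on $E^1_{p,q}$. Tensor nilpotency with index $n$ gives $(A/B)^{\otimes_B p}=0$ for $p\geq n$, so $E^1_{p,q}=0$ in that range. For $1\leq p\leq n-1$, I would prove by induction on $p$, following the argument of \cite{CLMS Pacific}, that the $B^e$-projective dimension of $(A/B)^{\otimes_B p}$ is at most $pu$: taking a projective bimodule resolution of $A/B$ of length $u$ and tensoring it on the right over $B$ with $(A/B)^{\otimes_B(p-1)}$, the one-sided projectivity of $A/B$ ensures both that the tensor product remains acyclic (Tor's vanish) and that each term stays $B^e$-projective, so projective dimensions add. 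Consequently $E^1_{p,q}=0$ whenever $p+q>pu$. Putting these together, for any $p,q>0$ with $p+q\geq nu$ either $p\geq n$ and $E^1_{p,q}=0$, or $1\leq p\leq n-1$, in which case $pu\leq(n-1)u<nu\leq p+q$ so again $E^1_{p,q}=0$.

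Hence the spectral sequence is identically zero in total degrees $\geq nu$, so the gap $(\Ker K/\Im I)_m$ vanishes for $m\geq nu$. Combined with the exactness at the $H_m(B,X)$ and $H_m(A|B,X)$ spots already supplied by Theorem~\ref{JZ}, this gives the announced long exact sequence down to degree $nu$; note that $nu\geq 2$ in all nontrivial cases, so the degree~$1$ caveat of Theorem~\ref{homology of JZ} does not intervene. The main obstacle is the bookkeeping for the bound $\mathrm{pd}_{B^e}((A/B)^{\otimes_B p})\leq pu$: one must carefully justify that one-sided projectivity suffices to preserve both exactness and $B^e$-projectivity when tensoring resolutions over $B$. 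Everything else follows formally from Theorem~\ref{homology of JZ}.
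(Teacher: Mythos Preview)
Your proposal is correct and follows essentially the same approach as the paper: verify the Tor hypothesis via one-sided projectivity, invoke Theorem~\ref{homology of JZ} to obtain the spectral sequence, and then combine tensor nilpotency with the bound $\mathrm{pd}_{B^e}((A/B)^{\otimes_B p})\leq pu$ to force $E^1_{p,q}=0$ whenever $p+q\geq nu$. The only cosmetic difference is that the paper cites \cite[Chapter~IX, Proposition~2.6]{CARTANEILENBERG} for the projective-dimension bound rather than sketching the inductive argument.
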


\begin{proof}
For degrees $\geq 2$ the terms at page $1$ of the spectral sequence which converges to the gap are
$$E^1_{p,q}= \Tor^{B^e}_{q}(X,(A/B)^{\otimes_Bp}) \mbox{   \ \ for } p,q>0$$
and $0$ elsewhere.

Since $A/B$ is projective on one side and is of projective dimension $u$ as a $B$-bimodule, we have that $(A/B)^{\otimes_B p}$ is of projective dimension at most $pu$ as a $B$-bimodule (see \cite[Chapter IX, Proposition 2.6]{CARTANEILENBERG}).

If $p\geq n$ or $q>pu$, then $E_{p,q}^1=0$. Hence if $p+q\geq n(u+1)$, then $E_{p,q}^1=0$ and the gap is $0$ in degrees at least $n(u+1)$.\qed
\end{proof}

\footnotesize
\noindent C.C.:\\
Institut Montpelli\'{e}rain Alexander Grothendieck, CNRS, Univ. Montpellier, France.\\
{\tt Claude.Cibils@umontpellier.fr}

\medskip
\noindent M.L.:\\
Instituto de Matem\'atica y Estad\'\i stica  ``Rafael Laguardia'', Facultad de Ingenier\'\i a, Universidad de la Rep\'ublica, Uruguay.\\
{\tt marclan@fing.edu.uy}

\medskip
\noindent E.N.M.:\\
Departamento de Matem\'atica, IME-USP, Universidade de S\~ao Paulo, Brazil.\\
{\tt enmarcos@ime.usp.br}

\medskip
\noindent A.S.:
\\IMAS-CONICET y Departamento de Matem\'atica,
 Facultad de Ciencias Exactas y Naturales,\\
 Universidad de Buenos Aires, Argentina. \\{\tt asolotar@dm.uba.ar}

\end{document}